\newtheorem{theorem}{Theorem}
\newtheorem{lemma}[theorem]{Lemma}
\newtheorem{proposition}[theorem]{Proposition}
\newtheorem{conjecture}[theorem]{Conjecture}
\newtheorem{remark}[theorem]{Remark}
\begin{document}
\title[Almost all permutations and involutions are Kostant negative]
{Almost all permutations and involutions\\ are Kostant negative}

\author[Samuel Creedon and Volodymyr Mazorchuk]
{Samuel Creedon and Volodymyr Mazorchuk}

\begin{abstract}
We prove that, when $n$ goes to infinity,
Kostant's problem has negative answer for
almost all simple highest weight modules in
the principal block of the BGG category 
$\mathcal{O}$ for the 
Lie algebra  $\mathfrak{sl}_n(\mathbb{C})$.
\end{abstract}

\maketitle

\section{Introduction and the result}\label{s1}

For $n>1$, the elements of the symmetric group $S_n$
naturally index the simple highest weight modules in the 
principal block $\mathcal{O}_0$ of the Bernstein-Gelfand-Gelfand
category $\mathcal{O}$ for the Lie algebra
$\mathfrak{sl}_n(\mathbb{C})$, see \cite{BGG,Hu}.
For $w\in S_n$, we denote by $L_w$ the corresponding
simple highest weight modules.

For each such $L_w$, there is a classical question,
usually referred to as {\em Kostant's problem}, 
which asks whether the universal enveloping algebra
of $\mathfrak{sl}_n(\mathbb{C})$ surjects onto the 
algebra of adjointly-finite linear endomorphism of 
$L_w$, see \cite{Jo}. We will call $L_w$ (and also $w$)
{\em Kostant positive} if the answer is ``yes''
and {\em Kostant negative} if the answer is ``no''.
In general, the answer to Kostant's problem for 
$L_w$ is not known, however, many special cases are
settled, see \cite{Ma23} for an overview.

It is known, see \cite{MS2}, that the answer to
Kostant's problem for $L_w$ is an invariant of the 
Kazhdan-Lusztig left cell of $w$. Each such left cell
contains a unique involution. Therefore, in principle,
it is enough to solve Kostant's problem for involutions
in $S_n$. Let us denote by
\begin{itemize}
\item $\mathbf{p}_n$ the number of Kostant positive
elements in $S_n$;
\item $\mathbf{i}_n$ the number of involutions in $S_n$
(see A000085 in \cite{OEIS});
\item $\mathbf{pi}_n$ the number of Kostant positive
involutions in $S_n$.
\end{itemize}
The following two conjectures were proposed in
\cite[Subsection~6.9]{MMM}:

\begin{conjecture}\label{conj1}
Almost all elements in  $S_n$ are Kostant negative
in the sense that we have $\frac{\mathbf{p}_n}{n!}\to 0$ when
$n\to \infty$.
\end{conjecture}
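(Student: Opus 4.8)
The plan is to turn the statement into a combinatorial estimate via the Robinson--Schensted correspondence and the left-cell invariance recorded in \cite{MS2}. Under Robinson--Schensted, $w\mapsto(P(w),Q(w))$, the left cells of $S_n$ are precisely the fibres of $w\mapsto Q(w)$; hence a left cell is recorded by a standard Young tableau, a left cell whose tableaux have shape $\lambda\vdash n$ consists of exactly $f^\lambda$ permutations, and there are exactly $f^\lambda$ left cells of that shape, where $f^\lambda$ denotes the number of standard Young tableaux of shape $\lambda$. Since Kostant positivity is constant on left cells, we may write
\[
\mathbf{p}_n=\sum_{\lambda\vdash n}p_\lambda\, f^\lambda,\qquad 0\le p_\lambda\le f^\lambda,
\]
where $p_\lambda$ is the number of Kostant positive left cells of shape $\lambda$. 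In particular $\mathbf{p}_n\le\sum_{\lambda\in\mathcal{B}_n}(f^\lambda)^2$, where $\mathcal{B}_n=\{\lambda\vdash n:p_\lambda\ge1\}$ is the set of shapes carrying at least one Kostant positive left cell. As $\sum_{\lambda\vdash n}(f^\lambda)^2=n!$, it suffices to prove that $\mathcal{B}_n$ is \emph{combinatorially thin}, in the sense that $\sum_{\lambda\in\mathcal{B}_n}(f^\lambda)^2=o(n!)$.

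The heart of the matter, and the step I expect to be the main obstacle, is the \emph{structural} input: a sufficient condition on $\lambda$ guaranteeing that \emph{every} left cell of shape $\lambda$ is Kostant negative, strong enough to confine $\mathcal{B}_n$ to a thin family --- plausibly to the hooks, possibly enlarged by the two-row shapes, the two-column shapes, and boundedly many small exceptional shapes. I would try to produce it as follows. First, use the description --- going back to Joseph and Gabber--Joseph, and available in category-$\mathcal{O}$ language through projective functors --- of the cokernel of the canonical map $U(\mathfrak{sl}_n(\mathbb{C}))\to\mathcal{L}(L_w,L_w)$ in terms of Kazhdan--Lusztig data attached to the left cell of $w$, reducing Kostant negativity to a concrete combinatorial inequality. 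Second, verify this inequality by hand for a few ``seed'' shapes, recovering the smallest known negative examples, which already occur for $\mathfrak{sl}_4$ at the shape $(2,2)$. Third, prove a \emph{propagation} lemma: if $\mu$ is obtained from $\lambda$ by removing a corner box (or, more generally, by a compatible parabolic/translation operation) and the left cells of shape $\mu$ are Kostant negative, then so are those of shape $\lambda$. Iterating the propagation lemma from the seeds then covers every shape outside the thin family. The propagation lemma is the delicate point, since Kostant's problem is known to interact only subtly with translation functors, and making it robust enough to reach all the required shapes is where the real work lies.

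Granting the structural input, it remains to carry out the asymptotic estimate, which is routine. For the hooks one computes
\[
\sum_{b=0}^{n-1}\bigl(f^{(n-b,\,1^{b})}\bigr)^2=\sum_{b=0}^{n-1}\binom{n-1}{b}^2=\binom{2n-2}{n-1}\sim\frac{4^{\,n-1}}{\sqrt{\pi(n-1)}}=o(n!).
\]
For shapes with at most two rows, $\sum_{\lambda}(f^\lambda)^2=\sum_{m\ge0}\bigl(\binom{n}{m}-\binom{n}{m-1}\bigr)^2=C_n\sim 4^{n}/(\sqrt{\pi}\,n^{3/2})$, the $n$-th Catalan number (it counts $321$-avoiding permutations), which is again $o(n!)$; shapes with at most two columns contribute the same by transposition; and any fixed finite list of remaining shape-families contributes at most (number of such shapes)$\cdot(\max f^\lambda)^2$, negligible against $n!$. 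Adding these $o(n!)$ contributions bounds $\sum_{\lambda\in\mathcal{B}_n}(f^\lambda)^2$, hence $\mathbf{p}_n$, by $o(n!)$, i.e. $\mathbf{p}_n/n!\to0$. The same bookkeeping with $f^\lambda$ in place of $(f^\lambda)^2$ and $\mathbf{i}_n=\sum_\lambda f^\lambda$ in place of $n!$ yields $\mathbf{pi}_n/\mathbf{i}_n\to0$ simultaneously.
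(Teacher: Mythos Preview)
Your write-up is not a proof: it is a strategy whose load-bearing step is explicitly left open. Everything hinges on the ``structural input'' that the set $\mathcal{B}_n$ of shapes carrying at least one Kostant positive left cell is confined to hooks, two-row shapes, two-column shapes, and finitely many exceptions. You do not prove this; you sketch a plan (seed cases plus a propagation lemma under removing corner boxes) and yourself flag the propagation step as ``the delicate point'' and ``where the real work lies''. No such propagation principle is available in the literature, and Kostant positivity is known to behave badly under exactly the kinds of translation/parabolic operations you would need, so this is a genuine gap rather than a routine omission. Note also that your approach commits you to a statement far stronger than the theorem: you need that for \emph{every} shape outside a thin family, \emph{all} left cells of that shape are Kostant negative. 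Nothing close to this is established, and it is not clear it is even true; certainly Proposition~\ref{prop5} does not imply it, since consecutively $2143$-avoiding permutations occur in a wide range of shapes.

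By contrast, the paper avoids any shape-by-shape classification. The key input is Proposition~\ref{prop5}: Kostant positivity of $L_w$ forces $w$ to be consecutively $2143$-avoiding, proved by a short computation with wall-crossing functors and the criterion of \cite{KMM}. Once this is in hand, the asymptotic step is elementary and entirely different from yours: partition $\{1,\dots,4k\}$ into $k$ disjoint windows of length four; the events ``$w$ avoids the consecutive $2143$ pattern on window $i$'' are independent with probability $23/24$ each, so the proportion of consecutively $2143$-avoiding permutations is at most $(23/24)^k\to 0$. No Robinson--Schensted bookkeeping, no hook or Catalan estimates, and no hypothetical propagation lemma are needed. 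If you want to repair your approach, the honest conclusion is that the structural input you postulate is a separate (and harder) problem; the route that actually works replaces it with the much weaker, but provable, pattern-avoidance obstruction.
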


\begin{conjecture}\label{conj2}
Almost all involutions in  $nS_n$ are Kostant negative
in the sense that we have $\frac{\mathbf{pi}_n}{\mathbf{i}_n}\to 0$ when
$n\to \infty$.
\end{conjecture}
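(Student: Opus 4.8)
The plan is to combine left-cell invariance of Kostant's problem \cite{MS2} with a sufficient condition for Kostant negativity, valid outside a ``small'' set of Kazhdan--Lusztig left cells, and with an asymptotic count showing that set to be negligible. Every left cell $\mathbf{L}$ in $S_n$ has a well-defined shape $\lambda(\mathbf{L})\vdash n$ (the common Robinson--Schensted shape of its elements), contains a unique involution (its Duflo involution), and the number of left cells of shape $\lambda$ equals the number $f^{\lambda}$ of standard Young tableaux of shape $\lambda$; in particular involutions in $S_n$ are in bijection with left cells. Writing $g_{\lambda}$ for the number of Kostant-positive left cells of shape $\lambda$, left-cell invariance gives
\[
\mathbf{p}_n=\sum_{\lambda\vdash n}g_{\lambda}\,f^{\lambda},
\qquad
\mathbf{pi}_n=\sum_{\lambda\vdash n}g_{\lambda},
\]
while $n!=\sum_{\lambda}(f^{\lambda})^2$ and $\mathbf{i}_n=\sum_{\lambda}f^{\lambda}$. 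It therefore suffices to exhibit a family $\mathcal{F}_n$ of shapes with $g_{\lambda}=0$ for $\lambda\notin\mathcal{F}_n$, and then to check $\sum_{\lambda\in\mathcal{F}_n}(f^{\lambda})^2=o(n!)$ for Conjecture~\ref{conj1} and $\sum_{\lambda\in\mathcal{F}_n}f^{\lambda}=o(\mathbf{i}_n)$ for Conjecture~\ref{conj2}, the statement at hand.

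The key step is the negativity criterion: to show that $L_w$ is Kostant negative whenever the diagram $\lambda(w)$ is ``large in both directions'' — concretely, whenever it is none of a hook, a two-row or two-column diagram, a rectangle, or a diagram whose rows and columns past the first are all short (a short explicit list, to be pinned down). Equivalently, in pattern language, Kostant positivity of $L_w$ should force the Duflo involution of its left cell to avoid a fixed permutation pattern. I would attack this via the bimodule reformulation of Kostant's problem together with the combinatorics of projective functors on $\mathcal{O}_0$: compare $\mathcal{L}(L_w,L_w)$ with $\overline{U(\mathfrak{sl}_n(\mathbb{C}))}$ through the cell action, and propagate a negative answer from small ``seed'' cells — where it has been computed, cf.\ \cite{MMM} — to all sufficiently large cells by an inductive/parabolic reduction isolating the offending block of the diagram. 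I expect this to be the main obstacle; everything after it is enumeration.

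Granting the criterion, $\mathcal{F}_n$ is a union of explicitly sparse shape families, each of which contributes only exponentially to $\sum(f^{\lambda})^2$. Hooks give $\sum_{k}\binom{n-1}{k}^2=\binom{2n-2}{n-1}=O(4^n)$; diagrams with at most two rows, at most two columns, or bounded rows/columns past the first give $O(C^n)$ by elementary binomial estimates — equivalently, permutations with no long increasing or decreasing subsequence, by Regev's asymptotics; and rectangles give at most $d(n)\cdot e^{-cn}n!$ for some $c>0$ (with $d(n)$ the number of divisors of $n$), since the hook-length formula and the Vershik--Kerov--Logan--Shepp large-deviation estimate make every fixed-aspect rectangle exponentially worse for the Plancherel weight than the limiting shape. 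As each bound is $o(n!)$, we get $\mathbf{p}_n/n!\to 0$; in the pattern formulation this drops out of the Marcus--Tardos theorem, $\mathbf{p}_n\le C^n=o(n!)$. The argument is robust: it only uses that the profiles of the diagrams in $\mathcal{F}_n$ stay a bounded distance from the limiting Plancherel profile.

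For Conjecture~\ref{conj2} the same $\mathcal{F}_n$ gives $\mathbf{pi}_n\le\sum_{\lambda\in\mathcal{F}_n}f^{\lambda}$, and the parallel bounds hold: hooks contribute $\sum_k\binom{n-1}{k}=2^{n-1}$, the bounded families contribute $O(C^n)$, and rectangles at most $d(n)\cdot e^{-cn}\sqrt{n!}$, negligible because $\max_{\lambda}f^{\lambda}=(n!)^{1/2+o(1)}$. Conceptually, $\lambda\mapsto f^{\lambda}/\mathbf{i}_n$ is the law of the Robinson--Schensted shape of a uniformly random involution in $S_n$, whose typical shape — by the Baik--Rains analysis of longest increasing subsequences of random involutions, together with the corresponding concentration estimate — has, like the Plancherel one, first row and first column of order $\sqrt n$, so it gives mass $o(1)$ to $\mathcal{F}_n$. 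Either way $\sum_{\lambda\in\mathcal{F}_n}f^{\lambda}=O(C^n)$, and since $\mathbf{i}_n\sim\frac{1}{\sqrt 2}\,n^{n/2}e^{-n/2+\sqrt n-1/4}$ grows faster than any exponential, $\mathbf{pi}_n/\mathbf{i}_n\to 0$, which is Conjecture~\ref{conj2}.
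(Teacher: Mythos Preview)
Your proposal has a genuine gap at its crucial step. The ``negativity criterion'' --- that Kostant positivity forces the Robinson--Schensted shape $\lambda(w)$ into a short explicit list $\mathcal{F}_n$ of hooks, two-row/two-column diagrams, rectangles, and a few further families --- is asserted but not proved, and your sketch (``propagate from seed cells by an inductive/parabolic reduction isolating the offending block'') is not an argument. Note how strong the claim is: you need $g_\lambda=0$ for \emph{every} shape outside $\mathcal{F}_n$, i.e.\ every single left cell of such a shape must be Kostant negative; nothing in the literature supports such uniform behaviour across all cells of a fixed shape, and the ``equivalently, in pattern language'' remark is not correct either, since the union of families you list is not characterised by avoidance of any single classical pattern. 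The enumeration you do afterwards is sound, but the whole argument rests on this unproven (and, as stated, likely false) criterion; you yourself flag it as ``the main obstacle'', and it remains one.

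For comparison, the paper does not go through shapes at all. Its key input (Proposition~\ref{prop5}) is that a Kostant-positive $w$ must avoid the \emph{consecutive} pattern $2143$: whenever $w(i+1)<w(i)<w(i+3)<w(i+2)$, an explicit computation with wall-crossing functors gives $\theta_{s_i}\theta_{s_{i+1}}\theta_{s_{i+2}}L_w\cong\theta_{s_i}L_w$, which violates the criterion of \cite{KMM}. Consecutive pattern avoidance is much weaker than classical avoidance (so Marcus--Tardos does not apply), and for involutions the avoidance events on disjoint $4$-blocks are not independent. The paper fixes this by taking $k\sim(n/4)^{1/3}$ initial $4$-blocks $A_1,\dots,A_k$, restricting to the set $Q_n$ of involutions with $w(A_i)\cap A_j=\varnothing$ for $i\neq j$ (shown to satisfy $|Q_n|/\mathbf{i}_n\to 1$ via the asymptotics $\mathbf{i}_n\sim c\,n^{n/2}e^{-n/2+\sqrt n}$), and then checking by a short case analysis that, conditionally on $Q_n$, each block avoids $2143$ with probability at most $23/24$ and that these $k$ events are independent; hence $\mathbf{pi}_n/\mathbf{i}_n\le(23/24)^k+o(1)\to 0$.
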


The main evidence for this conjecture was a complete answer
to Kostant problem for fully commutative elements obtained
in \cite{MMM} using combinatorics of the Temperley-Lieb algebra.
This answer was detailed enough to prove the analogues of
the above conjectures for fully commutative elements. 
In the present note we confirm both conjectures and prove
the following:

\begin{theorem}\label{thm3}
Conjecture~\ref{conj1} is true.
\end{theorem}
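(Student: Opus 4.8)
The plan is to reduce the count to a statement about shapes of Young diagrams, and then to show that the shapes relevant to Kostant positive cells carry asymptotically no Plancherel mass. By \cite{MS2}, Kostant positivity of $L_w$ depends only on the left Kazhdan--Lusztig cell of $w$; by the Robinson--Schensted correspondence the left cells of $S_n$ are parametrized by standard Young tableaux with $n$ boxes, a cell whose tableau has shape $\lambda\vdash n$ has exactly $f^\lambda$ elements (with $f^\lambda$ the number of standard Young tableaux of shape $\lambda$), and there are exactly $f^\lambda$ left cells of shape $\lambda$; the latter are in bijection with the involutions of shape $\lambda$, in accordance with the classical identity $\mathbf{i}_n=\sum_{\lambda\vdash n}f^\lambda$. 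Letting $c_\lambda$ denote the number of Kostant positive left cells of shape $\lambda$, so $0\le c_\lambda\le f^\lambda$, we get
\[
\mathbf{p}_n=\sum_{\lambda\vdash n}c_\lambda f^\lambda\le\sum_{\lambda\in\mathcal Q_n}(f^\lambda)^2,\qquad\mathcal Q_n:=\{\lambda\vdash n:c_\lambda>0\}.
\]
Since $n!=\sum_{\lambda\vdash n}(f^\lambda)^2$, it suffices to prove $\sum_{\lambda\in\mathcal Q_n}(f^\lambda)^2=o(n!)$, i.e.\ that the shapes admitting a Kostant positive cell have vanishing Plancherel measure in the limit.

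The crucial — and, I expect, the hardest — step is to confine $\mathcal Q_n$ to a small family of shapes. Here one must feed in what is known about Kostant's problem: the reformulations in terms of projective functors, together with the combinatorics developed in \cite{MMM} and its predecessors, should yield a statement of the shape ``if $\lambda$ is sufficiently generic then every left cell of shape $\lambda$ is Kostant negative''. The natural target is $\mathcal Q_n\subseteq\mathcal E_n$, where $\mathcal E_n$ consists of the hooks together with the shapes having at most $r$ rows or at most $r$ columns, for some fixed $r$ (the value $r=2$ already occurs in \cite{MMM}, where the Kostant positive fully commutative elements — whose shapes all have at most two rows — are classified; the hooks $(n)$ and $(1^n)$ correspond to the identity and $w_0$; and the other classically known Kostant positive families, such as those coming from cominuscule parabolics, again live among near-rectangular or hook-like shapes). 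Establishing that no further shape supports a Kostant positive cell is the representation-theoretic core of the argument.

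Granting such a confinement, the remaining estimate is elementary. The number of standard Young tableaux with $n$ boxes and at most $r$ columns is at most $r^n$ (at each of the $n$ steps of building the tableau by adding the boxes $1,2,\dots,n$ in order there are at most $r$ columns to which the next box can be added), so $\sum_{\lambda\text{ has }\le r\text{ columns}}(f^\lambda)^2\le(\sum_{\lambda\text{ has }\le r\text{ columns}}f^\lambda)^2\le r^{2n}$, and likewise for the shapes with at most $r$ rows; for hooks, $\sum_{\lambda\text{ hook}}(f^\lambda)^2=\sum_{k=1}^{n}\binom{n-1}{k-1}^2=\binom{2n-2}{n-1}\le 4^{n}$. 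Therefore $\sum_{\lambda\in\mathcal E_n}(f^\lambda)^2\le 2r^{2n}+4^{n}$, which is $o(n!)$ by Stirling's formula, with enough room that $r$ may even be allowed to grow slowly with $n$. Combining the three paragraphs, $\mathbf{p}_n\le\sum_{\lambda\in\mathcal Q_n}(f^\lambda)^2\le\sum_{\lambda\in\mathcal E_n}(f^\lambda)^2=o(n!)$, i.e.\ $\mathbf{p}_n/n!\to 0$, proving Conjecture~\ref{conj1}. (One could instead deduce that $\mathcal E_n$ has vanishing Plancherel measure from the Logan--Shepp--Vershik--Kerov limit-shape theorem, but the crude bounds above already suffice.)
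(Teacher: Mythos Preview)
Your reduction to Plancherel measure is clean, but the argument has a genuine gap at exactly the point you flag as ``the crucial --- and, I expect, the hardest --- step'': you never prove the confinement $\mathcal{Q}_n\subseteq\mathcal{E}_n$, and there is no evidence that it holds. The references you invoke do not come close to supplying it. The result of \cite{MMM} classifies Kostant positive \emph{fully commutative} permutations; these have at most two rows by definition, so the fact that their shapes lie in $\mathcal{E}_n$ is a tautology about the input, not a constraint on the output. Nothing in the literature rules out Kostant positive cells of, say, staircase or near-square shape, and your proposal gives no mechanism for doing so. As written, then, the argument proves only the conditional statement ``if every Kostant positive shape has bounded rows, bounded columns, or is a hook, then Conjecture~\ref{conj1} holds'' --- and the hypothesis is at least as hard as the conjecture itself.

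The paper's route is entirely different and avoids any shape-level classification. Proposition~\ref{prop5} shows, by a short computation with wall-crossing functors, that any Kostant positive $w$ must be \emph{consecutively} $2143$-avoiding. One then observes that for disjoint blocks of four consecutive positions the events ``no consecutive $2143$ pattern in this block'' are independent, each with probability $23/24$, so the proportion of consecutively $2143$-avoiding permutations in $S_n$ is at most $(23/24)^{\lfloor n/4\rfloor}\to 0$. This sidesteps the Plancherel picture completely: one never needs to know which shapes support Kostant positive cells, only a local pattern obstruction that already kills almost every permutation.
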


\begin{theorem}\label{thm4}
Conjecture~\ref{conj2} is true.
\end{theorem}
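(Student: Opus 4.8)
The plan is to translate Kostant positivity into a property of standard Young tableaux and then to show, by an entropy estimate, that the tableaux which can possibly be Kostant positive form an exponentially small fraction of all of them.

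\emph{Reduction to counting tableaux.} By \cite{MS2} the answer to Kostant's problem for $L_w$ depends only on the Kazhdan--Lusztig left cell of $w$, and by Robinson--Schensted the left cells in $S_n$ are indexed by standard Young tableaux: the cell attached to a tableau $Q$ of shape $\lambda$ has $f^\lambda$ elements, where $f^\lambda$ denotes the number of standard Young tableaux of shape $\lambda$, and it contains exactly one involution, the unique $w$ with $\mathrm{RSK}(w)=(Q,Q)$. Hence $\mathbf{pi}_n$ equals the number of size-$n$ tableaux $Q$ whose cell is Kostant positive, $\mathbf{i}_n=\sum_{\lambda\vdash n}f^\lambda$, the quantity $\mathbf{p}_n$ is the sum of $f^{\mathrm{sh}(Q)}$ over the Kostant-positive tableaux $Q$, and $n!=\sum_{\lambda\vdash n}(f^\lambda)^2$. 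So Theorems~\ref{thm3} and \ref{thm4} both reduce to locating the Kostant-positive tableaux.

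\emph{The crux: a necessary condition.} The plan is to establish a necessary condition for Kostant positivity that confines $\mathrm{sh}(Q)$ to a ``thin'' family $\Lambda_n$ of partitions. The guiding case is \cite{MMM}: among fully commutative (equivalently, two-column) elements the Kostant-positive ones have RSK-shape a two-column rectangle or such a rectangle with one box removed. One expects --- and I would try to prove in the required generality --- that a Kostant-positive $w$ has RSK-shape lying within $o(n)$ boxes of a genuine rectangle. It is in any case enough to show that every $\lambda\in\Lambda_n$ has Plancherel-entropy deficit at least $\varepsilon n$ for a fixed $\varepsilon>0$, equivalently $f^\lambda\le\sqrt{n!}\,e^{-\varepsilon n}$, so that the Plancherel-typical shapes (which carry the bulk of $\sum_{\lambda}f^\lambda$) are excluded; the trivial bound $|\Lambda_n|\le e^{O(\sqrt n)}$ is all that is needed on the size of $\Lambda_n$. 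The natural route to such a statement is via the bimodule/annihilator formulation of Kostant's problem together with the known homological criteria for it (see \cite{Jo,Ma23}): one wants to extract a combinatorial obstruction from the ``corner'' structure of $\lambda$, for instance from the behaviour of translation functors applied to $L_w$, ruling out shapes whose corners are too numerous or too spread out.

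\emph{The asymptotics.} Granting the above, $\mathbf{pi}_n\le\sum_{\lambda\in\Lambda_n}f^\lambda\le e^{O(\sqrt n)}\sqrt{n!}\,e^{-\varepsilon n}$, whereas $\mathbf{i}_n\sim\frac{1}{\sqrt2}\,n^{n/2}e^{-n/2+\sqrt n-1/4}$, so that $\mathbf{i}_n=\sqrt{n!}\,e^{(1+o(1))\sqrt n}$; hence $\mathbf{pi}_n/\mathbf{i}_n\le e^{-\varepsilon n+O(\sqrt n)}\to 0$, proving Theorem~\ref{thm4}. The same estimate with $(f^\lambda)^2$ and $n!=\sum_{\lambda\vdash n}(f^\lambda)^2$ in place of $f^\lambda$ and $\mathbf{i}_n$ gives $\mathbf{p}_n\le e^{O(\sqrt n)}\,n!\,e^{-2\varepsilon n}=o(n!)$, i.e.\ Theorem~\ref{thm3}, at the same time. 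The only genuinely difficult step is the second one --- carrying the fully commutative picture over to arbitrary cells, i.e.\ showing that Kostant positivity forces the RSK-shape into a family of strictly sub-maximal Plancherel entropy; the hook-length-formula estimates and the limit-shape input used in the last step are entirely standard.
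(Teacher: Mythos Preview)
Your proposal is not a proof but a programme whose central step is explicitly left unproven. You write that ``one expects --- and I would try to prove in the required generality --- that a Kostant-positive $w$ has RSK-shape lying within $o(n)$ boxes of a genuine rectangle,'' and later concede that this is ``the only genuinely difficult step.'' No argument for it is offered, and there is no evident mechanism by which the two-column picture of \cite{MMM} extrapolates to arbitrary cells; the bimodule and translation-functor tools you allude to do not, as stated, yield any constraint on $\mathrm{sh}(Q)$ outside the fully commutative case. Without this step the entropy estimate in your final paragraph has nothing to act on.

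The paper proceeds along an entirely different line and avoids RSK shapes altogether. Its necessary condition (Proposition~\ref{prop5}) is a pattern condition on the permutation itself: Kostant positivity of $L_w$ forces $w$ to be \emph{consecutively $2143$-avoiding}. This is proved by a short computation with wall-crossing functors $\theta_{s_i}$ and the criterion of \cite[Theorem~8.16]{KMM}. Theorem~\ref{thm4} then follows by an elementary probabilistic argument on involutions: one fixes $k$ disjoint $4$-element blocks $A_1,\dots,A_k$ with $n\asymp k^3$, restricts to the set $Q_n$ of involutions in which distinct blocks do not interact (shown to have density tending to $1$ via the asymptotics~\eqref{eq1}), and verifies that on $Q_n$ the events ``$w$ avoids $2143$ at the positions of $A_i$'' are independent with probability at most $\tfrac{23}{24}$ each. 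No tableau combinatorics, Plancherel measure, or hook-length estimates are needed.
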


Recall that a permutation $w\in S_n$ is called 
{\em consecutively $2143$-avoiding} provided that
the disjunction, over all $i=1,2,\dots,n-3$, 
of the inequalities 
\begin{displaymath}
w(i+1)<w(i)<w(i+4)<w(i+3)
\end{displaymath}
is false.
Our principal observation behind Theorems~\ref{thm3} and \ref{thm4} 
is the following necessary condition for Kostant positivity:

\begin{proposition}\label{prop5}
If $L_w$ is Kostant positive, then $w$ is consecutively
$2143$-avoiding.
\end{proposition}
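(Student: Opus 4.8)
The plan is to prove the contrapositive: if $w$ fails to be consecutively $2143$-avoiding, so that $w(i+1)<w(i)<w(i+4)<w(i+3)$ for some $i$, then $L_w$ is Kostant negative. The first move is to pass to Kazhdan--Lusztig combinatorics. By \cite{MS2} the answer to Kostant's problem is constant along the left cell $\mathbf{c}$ of $w$, so it suffices to extract, from the displayed inequalities, an obstruction to Kostant positivity that is visible in $\mathbf{c}$. These inequalities have a clean cell-theoretic content: $s_i$ and $s_{i+3}$ are commuting right descents of $w$, the two descending pairs $(w(i),w(i+1))$ and $(w(i+3),w(i+4))$ are value-ordered with the first lying entirely below the second, and the intermediate position $i+2$ is completely unconstrained. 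After translating the window to the front of the one-line notation (which changes neither the cell nor the answer), this is exactly the local shadow inside $S_n$ of the permutation $2143=s_1s_3\in S_4$; the free strand at position $i+2$ is what gives room to carry the $S_4$-picture up into the larger group in a stable way.

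The base case is classical: already in $\mathfrak{sl}_4(\mathbb{C})$ the simple highest weight module labelled by $2143=s_1s_3$ is Kostant negative --- this is the classical counterexample to Kostant's problem (see \cite{Jo} and the overview \cite{Ma23}), and, $s_1s_3$ being fully commutative, it is recovered by the Temperley--Lieb analysis of \cite{MMM}. To lift this to a statement about $L_w$ I would invoke the standard reformulation of Kostant's problem inside category $\mathcal{O}$: $L_w$ is Kostant positive precisely when the natural map from $U(\mathfrak{sl}_n(\mathbb{C}))/\operatorname{Ann}(L_w)$ to the adjointly finite endomorphisms of $L_w$ is onto, equivalently when a suitable comparison map built from projective functors is an isomorphism on $L_w$. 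The plan is then to evaluate the projective/translation functors attached to the reflections $s_i,\dots,s_{i+3}$ on $L_w$ and to show that the $2143$-configuration forces an extra composition factor in the target --- equivalently, that $\theta_{s_i}L_w$ and $\theta_{s_{i+3}}L_w$ together carry an obstruction of the same shape as the one in $\mathfrak{sl}_4$ --- so that surjectivity fails. Ideally this failure can already be detected at the level of the graded composition multiplicities / $W$-graph of $\mathbf{c}$, where the pattern forces a Kazhdan--Lusztig-type coefficient to be nonzero that would have to vanish in the Kostant positive case.

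The crux, and the step I expect to fight with, is precisely this bridge from the combinatorial pattern to the representation theory, because Kostant's problem is \emph{not} in general controlled by parabolic subgroups or by arbitrary pattern containment: the four values $w(i),w(i+1),w(i+3),w(i+4)$ need not form a block of $w$, so there is no honest copy of $\mathfrak{sl}_5$ to restrict to. What has to be used is the \emph{consecutive} nature of the configuration --- a window of five successive positions with a free middle slot --- which is exactly what makes the obstruction local. I would try one of two routes: (a) an explicit computation with translation/shuffling functors localised to the five strands $i,\dots,i+4$, arranged so that the $(i+2)$-nd strand decouples and the surviving four-strand picture reproduces the $\mathfrak{sl}_4$ counterexample regardless of the rest of $w$; or (b) a normal-form reduction of $w$ within $\mathbf{c}$, using Knuth moves / star operations, to an element in which the $2143$-piece is isolated, and then apply (a). In either approach the delicate point is to confirm that a window of five consecutive positions --- and not fewer --- is a robust enough witness to force Kostant negativity irrespective of the ambient permutation.
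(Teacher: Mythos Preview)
Your proposal is a plan, not a proof, and the step you yourself flag as ``the crux'' is genuinely missing. Two concrete problems:

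\textbf{The lifting strategy does not work as stated.} Left-cell invariance \cite{MS2} says the Kostant answer is constant along a left cell \emph{inside a fixed $S_n$}; it gives you no mechanism for transporting the $\mathfrak{sl}_4$ counterexample $s_1s_3$ into $S_n$. Your route~(b) via Knuth moves stays in the left cell of $w$ in $S_n$ and has no reason to produce an element in which the four positions become an honest parabolic block, so you cannot ``restrict to $\mathfrak{sl}_4$'' at the end. Route~(a) is closer in spirit to what actually works, but you never say \emph{which} translation-functor identity you are aiming for, and ``forces an extra composition factor'' is not a criterion.

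\textbf{What is missing is a checkable criterion.} The paper's argument is a short direct computation, made possible by \cite[Theorem~8.16]{KMM}: to prove Kostant negativity it suffices to exhibit an isomorphism
\[
\theta_{s_i}\theta_{s_{i+1}}\theta_{s_{i+2}}L_w \;\cong\; \theta_{s_i}L_w .
\]
One then runs the three translation functors in turn, using only that each $\theta_s L$ (when non-zero) has Loewy length three with the relevant Bruhat neighbour appearing once in the Jantzen middle, together with the indecomposability-preservation result of \cite{CMZ}. The consecutive inequalities $w(i{+}1)<w(i)<w(i{+}3)<w(i{+}2)$ are exactly what is needed to know, at each step, which simples are killed and which survive. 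No appeal to the $\mathfrak{sl}_4$ base case, no cell reduction, and no ``free middle strand'' is required: the pattern occupies \emph{four} consecutive positions $i,i{+}1,i{+}2,i{+}3$ (the $i{+}4$ in the displayed definition is a typo, as the proof makes clear), so your five-strand picture with an unconstrained slot at $i{+}2$ is off by one and leads you to look for a more elaborate argument than is needed.
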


This observation crystallized during our work on
two projects related to Kostant's problem for
longest elements in parabolic subgroups and
for $S_7$ which are to appear in \cite{CM1,CM2}. 
With Proposition~\ref{prop5} at hand, Theorem~\ref{thm3}
is fairly straightforward while Theorem~\ref{thm4} requires
some analytical estimates of the number of 
consecutively $2143$-avoiding involutions.
It is well-known that $2143$-avoiding involutions 
are enumerated by Motzkin numbers, see \cite{GPP},
however, we did not manage to find much information
about consecutively $2143$-avoiding involutions.

All proofs are collected in Section~\ref{s2}.

\subsection*{Acknowledgements}

The first author is partially supported by Vergstiftelsen.
The second author is partially supported by the Swedish Research Council.

\section{Proofs}\label{s2}

\subsection{Proof of Proposition~\ref{prop5}}\label{s2.1}
Our arguments here are an adaptation of the arguments in
\cite[Theorem~12]{MS} and \cite[Section~5.5]{MMM}.

For $1\leq i<n$, let $s_i=(i,i+1)$ be the elementary transposition
in $S_n$. Let $\theta_{s_i}$ be the corresponding translation through
the $s_i$-wall, this is an endofunctor of $\mathcal{O}_0$.

Assume that we are given $w\in S_n$ and $i\leq n-3$ such that
we have the consecutive $2143$-pattern in $w$ as follows:
$w(i+1)<w(i)<w(i+3)<w(i+2)$. We want to prove that the corresponding
$L_w$ is Kostant negative. Due to \cite[Theorem~8.16]{KMM},
for this it is enough to prove that 
$\theta_{s_i}\theta_{s_{i+1}}\theta_{s_{i+2}}L_w$
is isomorphic to $\theta_{s_i}L_w$.

The module $\theta_{s_{i+2}}L_w$ is non-zero as
$w(i+3)<w(i+2)$. This module has Loewy length three with 
$L_w$ being both its simple top and simple socle, cf. \cite[Proposition~46]{CMZ}. The semi-simple
Jantzen  middle contains $L_{ws_{i+1}}$ with multiplicity $1$ 
since $w$ and $ws_{i+1}$ are Bruhat neighbours
and, moreover, $ws_{i+1}>w$ due to $w(i+1)<w(i+2)$.

Next we note that $\theta_{s_{i+1}}$ kills $L_w$ as
$w(i+1)<w(i+2)$. Moreover, $\theta_{s_{i+1}}$ does not kill
$L_{ws_{i+1}}$ since $ws_{i+1}>w$. As $\theta_{s_{i+2}}L_w$ 
is indecomposable, so is $\theta_{s_{i+1}}\theta_{s_{i+2}}L_w$
by \cite[Proposition~2]{CMZ}. This implies that the latter module is
isomorphic to $\theta_{s_{i+1}}L_{ws_{i+1}}$.

Now, again, $\theta_{s_{i+1}}L_{ws_{i+1}}$ has Loewy length three with 
$L_{ws_{i+1}}$ being both its simple top and simple socle. The semi-simple
Jantzen  middle contains $L_{w}$ with multiplicity $1$ 
since $ws_{i+1}>w$ and $w$ and $ws_{i+1}$ are Bruhat neighbours.
n
Finally, we note that $\theta_{s_{i}}$ kills $L_{ws_{i+1}}$ as
$w(i)<w(i+2)$. Moreover, $\theta_{s_{i}}$ does not kill
$L_{w}$ since $w(i+1)<w(i)$n. As $\theta_{s_{i+2}}L_w$ 
is indecomposable, so is $\theta_{s_{i}}\theta_{s_{i+1}}\theta_{s_{i+2}}L_w$
by \cite[Proposition~2]{CMZ}. This implies that the latter module is
isomorphic to $\theta_{s_{i}}L_{w}$. 
Proposition~\ref{prop5} follows.

\begin{remark}\label{rem9}
{\em
Our proof of Proposition~\ref{prop5} not only shows 
Kostant negativity under the assumption of 
failing the consecutive $2143$-avoiding, but it also shows
the failure of the second K{\aa}hrstr{\"o}ms condition,
see \cite[Conjecture~1.2]{KMM}, under this assumption.
}
\end{remark}

\subsection{Proof of Theorem~\ref{thm3}}\label{s2.2}

For $k>0$, let $n>4k$ and $0\leq i\leq k-1$. For each
$w\in S_n$, there are exactly $24$ elements $u\in S_n$
with the property that $u(s)=w(s)$ for all 
$s\not\in\{4i+1,4i+2,4i+3,4i+4\}$.  Out of these 
$24$ elements, we have exactly one such that 
$u(4i+2)<u(4i+1)<u(4i+4)<u(4i+3)$. This means that, if
we choose $w$ randomly (uniformly distributed), 
it will consequently avoid $2143$ at positions
$4i+1,4i+2,4i+3,4i+4$ with probability $\frac{23}{24}$.
We call this random event $X_i$.

Clearly, for $i\neq j$, the events $X_i$ and $X_j$
are independent. Hence the probability of their
intersection, over all $i$, equals $\left(\frac{23}{24}\right)^k$.
This goes to $0$ as $k$ goes to $\infty$.
Theorem~\ref{thm3} follows.

\subsection{Proof of Theorem~\ref{thm4}}\label{s2.3}

Unfortunately, we cannot naively use the proof of Theorem~\ref{thm3}
for Theorem~\ref{thm4} as the obvious analogues of the events
$X_i$ do not really make sense and those parts which do make
sense result in events that are not independent.
Therefore we need to amend the situation in a subtle way.

Recall, see \cite[Page 64]{Kn}, that
\begin{equation}\label{eq1}
\mathbf{i}_n\sim  \mathrm{const}\cdot
n^{n/2}\cdot \exp(-n/2+\sqrt{n}).
\end{equation}

For $k>0$, we let $4k^3\leq n< 4(k+1)^3$. Define $A_1=\{1,2,3,4\}$,
$A_2=\{5,6,7,8\}$,\dots, $A_k=\{4k-3,4k-2,4k-1,4k\}$.
Let $Q_n$ be the set of all involutions $w\in S_n$
with the property that, for any $1\leq i<j\leq k$,
we have $w(A_i)\cap A_j=\varnothing$.

\begin{lemma}\label{lem6}
We have $\frac{|Q_n|}{\mathbf{i}_n}\to 1$ when $n\to \infty$. 
\end{lemma}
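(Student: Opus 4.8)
The plan is to show that the complement $\mathbf{i}_n-|Q_n|$ is negligible compared with $\mathbf{i}_n$ by a crude union bound, exploiting that the blocks $A_1,\dots,A_k$ only cover an initial segment of size $4k=O(n^{1/3})$ of $\{1,\dots,n\}$.

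First I would describe membership in $Q_n$ through its negation: an involution $w\in S_n$ fails to lie in $Q_n$ exactly when there is a pair $1\le i<j\le k$ together with elements $a\in A_i$ and $b\in A_j$ such that $w(a)=b$; since $w^2=\mathrm{id}$ and $a\neq b$, this is the same as saying that the cycle decomposition of $w$ contains the transposition $(a\,b)$. For a fixed such pair $a\neq b$, the involutions of $S_n$ containing $(a\,b)$ are in bijection with the involutions of the symmetric group on the remaining $n-2$ points, so there are exactly $\mathbf{i}_{n-2}$ of them. Summing over the $\binom k2$ choices of $(i,j)$ and the $16$ choices of $(a,b)\in A_i\times A_j$ then yields
\[
\mathbf{i}_n-|Q_n|\ \le\ 16\binom k2\,\mathbf{i}_{n-2}\ =\ 8k(k-1)\,\mathbf{i}_{n-2}\ \le\ 8k^2\,\mathbf{i}_{n-2}.
\]

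Next I would compare $\mathbf{i}_{n-2}$ with $\mathbf{i}_n$. The cleanest route is the classical recurrence $\mathbf{i}_n=\mathbf{i}_{n-1}+(n-1)\mathbf{i}_{n-2}$, which gives $\mathbf{i}_n\ge (n-1)\mathbf{i}_{n-2}$, hence
\[
0\ \le\ 1-\frac{|Q_n|}{\mathbf{i}_n}\ =\ \frac{\mathbf{i}_n-|Q_n|}{\mathbf{i}_n}\ \le\ \frac{8k^2}{n-1}.
\]
(Alternatively, the same bound $\mathbf{i}_{n-2}/\mathbf{i}_n=O(1/n)$ can be read off the asymptotic \eqref{eq1}.) Since $4k^3\le n$ we have $k\le (n/4)^{1/3}$, so $\dfrac{8k^2}{n-1}\le \dfrac{8\,(n/4)^{2/3}}{n-1}\longrightarrow 0$ as $n\to\infty$. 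This gives $\frac{|Q_n|}{\mathbf{i}_n}\to 1$ and proves the lemma.

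I do not expect a serious obstacle here; the statement is soft. The one point that must be kept straight is the interplay of the two scales: the number of potential ``bad'' transpositions is of order $k^2$, while the effective branching factor of the involution recurrence is of order $n$, so the estimate closes precisely because the hypothesis $n\ge 4k^3$ forces $k=O(n^{1/3})=o(\sqrt n)$. The cubic — rather than, say, quadratic — growth of $n$ in $k$ is more room than the lemma itself needs, and is presumably calibrated for the subsequent counting of consecutively $2143$-avoiding involutions supported on the blocks $A_1,\dots,A_k$.
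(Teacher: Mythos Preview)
Your proof is correct and follows essentially the same route as the paper: both argue via the same union bound $|\overline{Q_n}|\le 16\binom{k}{2}\mathbf{i}_{n-2}$ on the complement. The only difference is cosmetic --- the paper estimates $\mathbf{i}_{n-2}/\mathbf{i}_n$ via the asymptotic formula~\eqref{eq1}, whereas you use the recurrence $\mathbf{i}_n=\mathbf{i}_{n-1}+(n-1)\mathbf{i}_{n-2}$, which is arguably cleaner here.
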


\begin{proof}
We do a very grave over-count for the complement $\overline{Q_n}$ of $Q_n$.
We construct an element of $\overline{Q_n}$ as follows:
\begin{itemize}
\item choose a pair $1\leq i<j\leq k$ in $\binom{k}{2}$ possible ways;
\item choose a point in $A_i$ and a point in $A_j$ in 
$4\cdot 4=16$ possible ways (these two points will be swapped by our element);
\item take any involution on the complement to this 
$2$-element set in $\mathbf{i}_{n-2}$ different ways.
\end{itemize}
This will produce all elements in $\overline{Q_n}$, however, some in a non-unique way.
This gives 
\begin{displaymath}
|\overline{Q_n}| \leq 16 \binom{k}{2}\mathbf{i}_{n-2}.
\end{displaymath}
We need to show that $\frac{|\overline{Q_n}|}{\mathbf{i}_{n}}\to 0$, when $n\to\infty$.
We use Formula~\eqref{eq1} both for $\mathbf{i}_{n-2}$ in the numerator and
for $\mathbf{i}_{n}$ in the denominator. Since $ 4k^3\leq n\leq 4(k+1)^3$,
it suffices to check that $\frac{{16}{\binom{k}{2}}{\mathbf{i}_{4k^3-2}}}{\mathbf{i}_{4k^3}}\rightarrow 0$,
when $k\to\infty$. In the ratio
$\frac{\mathbf{i}_{4k^3-2}}{\mathbf{i}_{4k^3}}$ we will get 
a $k^3$ term in the denominator, while $\binom{k}{2}$ contributes only 
with a $k^2$ term in the numerator. The claim follows.
\end{proof}

Let us pick a random element $w$ of $Q_n$ (uniformly distributed).
Denote by $X_i$ the random event that $w$ consecutively avoids
$2143$ at positions $A_i$. 

\begin{lemma}\label{lem7}
The probability of $X_i$ is at most  $\frac{23}{24}$.
\end{lemma}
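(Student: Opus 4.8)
The plan is to analyze the conditional distribution of $w$ restricted to the block $A_i$, given that $w\in Q_n$. Since elements of $Q_n$ never map a point of $A_i$ into any $A_j$ with $j\neq i$, the restriction $w|_{A_i}$ is almost an involution of $A_i$ — the only subtlety is that a point of $A_i$ may be matched with a point outside $A_1\cup\dots\cup A_k$. So the first step is to observe that, conditioned on which points of $A_i$ are ``internally matched'' (paired within $A_i$) versus ``externally matched'' (paired with a point in $\{4k+1,\dots,n\}$), and conditioned on the behaviour of $w$ everywhere else, the internal pairing of the internally-matched points of $A_i$ is uniformly distributed among all such pairings, and the external targets are uniform among available outside points. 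This is just the usual symmetry of the uniform measure on involutions under relabelling within a block.

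Next I would argue that, regardless of this conditioning, there is at least one completion that creates the forbidden consecutive pattern $w(4i+2)<w(4i+1)<w(4i+4)<w(4i+3)$ on $A_i$, and that this completion is no more likely than the uniform weight $\tfrac{1}{24}$ would suggest. Concretely, fix the set $S\subseteq A_i$ of internally-matched points and the multiset of values assigned to the externally-matched points (these values lie outside $A_i$ and are therefore either all less than every element of $A_i$ or all greater, depending only on whether they sit in $\{4k+1,\dots,n\}$ — in fact they are all $>4k\ge$ every element of $A_i$, so on positions $4i+1,\dots,4i+4$ the externally-matched slots carry the four largest available values in increasing-by-conditioning... ) — the cleanest route is simply: among the at most $24$ equally likely arrangements of the four output values $w(4i+1),\dots,w(4i+4)$ consistent with the conditioning, at most one realizes the pattern $2143$ in the prescribed consecutive form, so the conditional probability of $X_i^{c}$ is at most $\tfrac{1}{24}$, hence $\mathbb{P}(X_i)\ge$ ... wait, we want an \emph{upper} bound on $\mathbb{P}(X_i)$, i.e.\ a \emph{lower} bound on $\mathbb{P}(X_i^c)$: so the point is that \emph{at least} one of the (at most $24$) equally likely local arrangements produces the forbidden pattern, whence $\mathbb{P}(X_i^c\mid\text{conditioning})\ge \tfrac{1}{24}$ whenever such an arrangement is combinatorially available, and one checks it is always available because one can always reorder the four relevant values (they are all distinct) into the decreasing-then-decreasing shape $2143$ — here the involution constraint must be handled, since $w$ is an involution and reordering outputs on $A_i$ forces the corresponding reordering of inputs elsewhere, but because $Q_n$ already forbids $w(A_i)$ from meeting other blocks, such a local swap stays inside $Q_n$.

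The main obstacle I anticipate is precisely this involution bookkeeping: unlike the proof of Theorem~\ref{thm3}, where the $24$ local arrangements were genuinely free, here altering $w$ on $A_i$ forces compensating changes at the matched partners, and one must verify that (a) the forbidden-pattern arrangement is actually achievable within $Q_n$ for \emph{every} conditioning value (in particular when some points of $A_i$ are externally matched, so that only a sub-block of positions is free to be permuted), and (b) the resulting map on the conditional probability space is measure-preserving so that the ``$\ge \tfrac{1}{24}$'' bound is legitimate. The case analysis is small — $A_i$ has four points, and the number of internally-matched points is $0$, $2$, or $4$ — so I would dispose of it by checking these three cases: if all four are externally matched there is nothing to choose and one must instead note the external values are forced into increasing order and hence \emph{never} form $2143$, which would break the argument, so I would need the slightly more careful claim that in that degenerate case $X_i$ can fail due to the freedom in \emph{which} outside points get matched to $A_i$ — and then combine. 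Granting this, summing over $i$ and using independence-like behaviour across blocks (to be exploited in the next lemma) yields the theorem; for this lemma itself, the conclusion $\mathbb{P}(X_i)\le\tfrac{23}{24}$ follows once the conditional bound $\mathbb{P}(X_i^c\mid\mathcal{F})\ge\tfrac{1}{24}$ is established and averaged over the conditioning $\sigma$-algebra $\mathcal{F}$.
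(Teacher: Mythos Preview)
Your plan is the same one the paper uses: partition $Q_n$ into classes according to how $w$ behaves on $A_i\cup w(A_i)$, and within each class count how many elements realize the consecutive $2143$ pattern at the $A_i$-positions. But the execution has real gaps.

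First, your case split is wrong. You say the number of ``internally-matched'' points of $A_i$ is $0$, $2$, or $4$, but this overlooks fixed points of the involution inside $A_i$. The correct invariant is $|A_i\cap w(A_i)|$, which can take any value in $\{0,1,2,3,4\}$; the paper treats all five cases, and the classes have sizes $24,24,24,16,10$ respectively, with exactly one $2143$-element in each. Your three-case version simply misses the odd cases.

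Second, the step you yourself flag as the ``main obstacle'' is never actually dealt with. You assert that ``one can always reorder the four relevant values \dots\ into the $2143$ shape'', but when $w(A_i)=A_i$ only $10$ of the $24$ orderings are involutions on $A_i$, and it is a genuine (if easy) check that the $2143$ ordering is among them --- namely $(4i{-}3,4i{-}2)(4i{-}1,4i)$. The same kind of check is needed in the case $|A_i\cap w(A_i)|=3$. Without doing this, the lower bound $\mathbb{P}(X_i^c\mid\mathcal{F})\ge\tfrac{1}{24}$ is unsupported precisely in the cases where the involution constraint bites hardest.

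Third, your discussion of the all-external case is internally inconsistent. Having conditioned only on the \emph{set} $\{r,s,t,v\}=w(A_i)\setminus A_i$ (not on the bijection), there are $24$ equally-likely assignments from $A_i$ to $\{r,s,t,v\}$, so it is not true that ``there is nothing to choose'' --- this is exactly the paper's Case~5, and exactly one of the $24$ bijections gives $2143$. The confusion stems from over-conditioning: if ``the behaviour of $w$ everywhere else'' means outside $A_i$ rather than outside $A_i\cup w(A_i)$, you have already pinned down the bijection and destroyed the symmetry you need. The correct equivalence relation is agreement outside $A_i\cup w(A_i)$.
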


\begin{proof}
We have to consider several cases depending
on $|A_i\cap w(A_i)|$.

{\bf Case~1.} $|A_i\cap w(A_i)|=4$. In this case the set of 
all involutions of $S_n$ which agree with $w$ outside 
$A_i\cup w(A_i)=A_i$ contains $10$ elements and exactly one of them does not
avoid $2143$ at the positions of $A_i$. Below  we show how
these ten elements look at the $A_i$-positions. Our convention is that a
fixed point of $w$ is shown as a singleton and a pair of points
that are swapped by $w$ are connected. The unique element that does not
avoid $2143$ is the middle  one in the second row.

\begin{center}
\resizebox{\textwidth}{!}{
\begin{tikzpicture}
\draw[gray, thin, dotted] (0,3) -- (5,3);
\draw[black,fill=black] (1,3) circle (.5ex);
\draw[black,fill=black] (2,3) circle (.5ex);
\draw[black,fill=black] (3,3) circle (.5ex);
\draw[black,fill=black] (4,3) circle (.5ex);
\draw[gray, thin,  dotted] (6,3) -- (11,3);
\draw[black,fill=black] (7,3) circle (.5ex);
\draw[black,fill=black] (8,3) circle (.5ex);
\draw[black,thick] (8,3) arc (60:120:1);
\draw[black,fill=black] (9,3) circle (.5ex);
\draw[black,fill=black] (10,3) circle (.5ex);
\draw[gray, thin,  dotted] (12,3) -- (17,3);
\draw[black,fill=black] (13,3) circle (.5ex);
\draw[black,fill=black] (14,3) circle (.5ex);
\draw[black,fill=black] (15,3) circle (.5ex);
\draw[black,thick] (15,3) arc (60:120:1);
\draw[black,fill=black] (16,3) circle (.5ex);
\draw[gray, thin,  dotted] (18,3) -- (23,3);
\draw[black,fill=black] (19,3) circle (.5ex);
\draw[black,fill=black] (20,3) circle (.5ex);
\draw[black,fill=black] (21,3) circle (.5ex);
\draw[black,fill=black] (22,3) circle (.5ex);
\draw[black,thick] (22,3) arc (60:120:1);
\draw[gray, thin,  dotted] (24,3) -- (29,3);
\draw[black,fill=black] (25,3) circle (.5ex);
\draw[black,fill=black] (26,3) circle (.5ex);
\draw[black,fill=black] (27,3) circle (.5ex);
\draw[black,thick] (27,3) arc (60:120:2);
\draw[black,fill=black] (28,3) circle (.5ex);
\draw[gray, thin,  dotted] (0,1) -- (5,1);
\draw[black,fill=black] (1,1) circle (.5ex);
\draw[black,fill=black] (2,1) circle (.5ex);
\draw[black,fill=black] (3,1) circle (.5ex);
\draw[black,fill=black] (4,1) circle (.5ex);
\draw[black,thick] (4,1) arc (60:120:2);
\draw[gray, thin,  dotted] (6,1) -- (11,1);
\draw[black,fill=black] (7,1) circle (.5ex);
\draw[black,fill=black] (8,1) circle (.5ex);
\draw[black,fill=black] (9,1) circle (.5ex);
\draw[black,fill=black] (10,1) circle (.5ex);
\draw[black,thick] (10,1) arc (60:120:3);
\draw[gray, thin,  dotted] (12,1) -- (17,1);
\draw[black,fill=black] (13,1) circle (.5ex);
\draw[black,fill=black] (14,1) circle (.5ex);
\draw[black,thick] (14,1) arc (60:120:1);
\draw[black,fill=black] (15,1) circle (.5ex);
\draw[black,fill=black] (16,1) circle (.5ex);
\draw[black,thick] (16,1) arc (60:120:1);
\draw[gray, thin,  dotted] (18,1) -- (23,1);
\draw[black,fill=black] (19,1) circle (.5ex);
\draw[black,fill=black] (20,1) circle (.5ex);
\draw[black,fill=black] (21,1) circle (.5ex);
\draw[black,thick] (21,1) arc (60:120:1);
\draw[black,fill=black] (22,1) circle (.5ex);
\draw[black,thick] (22,1) arc (60:120:3);
\draw[gray, thin,  dotted] (24,1) -- (29,1);
\draw[black,fill=black] (25,1) circle (.5ex);
\draw[black,fill=black] (26,1) circle (.5ex);
\draw[black,fill=black] (27,1) circle (.5ex);
\draw[black,thick] (27,1) arc (60:120:2);
\draw[black,fill=black] (28,1) circle (.5ex);
\draw[black,thick] (28,1) arc (60:120:2);
\end{tikzpicture}
}
\end{center}

{\bf Case~2.} $|A_i\cap w(A_i)|=3$. Consider
the set of all involutions of $S_n$ which agree with $w$ outside 
$A_i\cup w(A_i)=A_i\cup\{r\}$ and for which $w(r)\in A_i$.
Note that, from the definition of $Q_n$ it follows that $r$
is greater than all elements of $A_i$.
This set of involutions contains $16$ elements and exactly one of them does not
avoid $2143$ at the positions of $A_i$. Below  we show how
these $16$ elements look at the $A_i$-positions. The
additional convention is that the point that is swapped
with $r$ is bigger. The unique element that does not
avoid $2143$ is  the second one in the third row.
\begin{center}
\resizebox{\textwidth}{!}{
\begin{tikzpicture}
\draw[gray, thin, dotted] (0,7) -- (5,7);
\draw[black,fill=black] (1,7) circle (1ex);
\draw[black,fill=black] (2,7) circle (.5ex);
\draw[black,fill=black] (3,7) circle (.5ex);
\draw[black,fill=black] (4,7) circle (.5ex);
\draw[gray, thin,  dotted] (6,7) -- (11,7);
\draw[black,fill=black] (7,7) circle (1ex);
\draw[black,fill=black] (8,7) circle (.5ex);
\draw[black,fill=black] (9,7) circle (.5ex);
\draw[black,thick] (9,7) arc (60:120:1);
\draw[black,fill=black] (10,7) circle (.5ex);
\draw[gray, thin,  dotted] (12,7) -- (17,7);
\draw[black,fill=black] (13,7) circle (1ex);
\draw[black,fill=black] (14,7) circle (.5ex);
\draw[black,fill=black] (15,7) circle (.5ex);
\draw[black,fill=black] (16,7) circle (.5ex);
\draw[black,thick] (16,7) arc (60:120:1);
\draw[gray, thin,  dotted] (18,7) -- (23,7);
\draw[black,fill=black] (19,7) circle (1ex);
\draw[black,fill=black] (20,7) circle (.5ex);
\draw[black,fill=black] (21,7) circle (.5ex);
\draw[black,fill=black] (22,7) circle (.5ex);
\draw[black,thick] (22,7) arc (60:120:2);
\draw[gray, thin,  dotted] (0,5) -- (5,5);
\draw[black,fill=black] (1,5) circle (.5ex);
\draw[black,fill=black] (2,5) circle (1ex);
\draw[black,fill=black] (3,5) circle (.5ex);
\draw[black,fill=black] (4,5) circle (.5ex);
\draw[gray, thin,  dotted] (6,5) -- (11,5);
\draw[black,fill=black] (7,5) circle (.5ex);
\draw[black,fill=black] (8,5) circle (1ex);
\draw[black,fill=black] (9,5) circle (.5ex);
\draw[black,thick] (9,5) arc (60:120:2);
\draw[black,fill=black] (10,5) circle (.5ex);
\draw[gray, thin,  dotted] (12,5) -- (17,5);
\draw[black,fill=black] (13,5) circle (.5ex);
\draw[black,fill=black] (14,5) circle (1ex);
\draw[black,fill=black] (15,5) circle (.5ex);
\draw[black,fill=black] (16,5) circle (.5ex);
\draw[black,thick] (16,5) arc (60:120:3);
\draw[gray, thin,  dotted] (18,5) -- (23,5);
\draw[black,fill=black] (19,5) circle (.5ex);
\draw[black,fill=black] (20,5) circle (1ex);
\draw[black,fill=black] (21,5) circle (.5ex);
\draw[black,fill=black] (22,5) circle (.5ex);
\draw[black,thick] (22,5) arc (60:120:1);
\draw[gray, thin, dotted] (0,3) -- (5,3);
\draw[black,fill=black] (1,3) circle (.5ex);
\draw[black,fill=black] (2,3) circle (.5ex);
\draw[black,fill=black] (3,3) circle (1ex);
\draw[black,fill=black] (4,3) circle (.5ex);
\draw[gray, thin,  dotted] (6,3) -- (11,3);
\draw[black,fill=black] (7,3) circle (.5ex);
\draw[black,fill=black] (8,3) circle (.5ex);
\draw[black,thick] (8,3) arc (60:120:1);
\draw[black,fill=black] (9,3) circle (1ex);
\draw[black,fill=black] (10,3) circle (.5ex);
\draw[gray, thin,  dotted] (12,3) -- (17,3);
\draw[black,fill=black] (13,3) circle (.5ex);
\draw[black,fill=black] (14,3) circle (.5ex);
\draw[black,fill=black] (15,3) circle (1ex);
\draw[black,fill=black] (16,3) circle (.5ex);
\draw[black,thick] (16,3) arc (60:120:3);
\draw[gray, thin,  dotted] (18,3) -- (23,3);
\draw[black,fill=black] (19,3) circle (.5ex);
\draw[black,fill=black] (20,3) circle (.5ex);
\draw[black,fill=black] (21,3) circle (1ex);
\draw[black,fill=black] (22,3) circle (.5ex);
\draw[black,thick] (22,3) arc (60:120:2);
\draw[gray, thin,  dotted] (0,1) -- (5,1);
\draw[black,fill=black] (1,1) circle (.5ex);
\draw[black,fill=black] (2,1) circle (.5ex);
\draw[black,fill=black] (3,1) circle (.5ex);
\draw[black,fill=black] (4,1) circle (1ex);
\draw[gray, thin,  dotted] (6,1) -- (11,1);
\draw[black,fill=black] (7,1) circle (.5ex);
\draw[black,fill=black] (8,1) circle (.5ex);
\draw[black,thick] (8,1) arc (60:120:1);
\draw[black,fill=black] (9,1) circle (.5ex);
\draw[black,fill=black] (10,1) circle (1ex);
\draw[gray, thin,  dotted] (12,1) -- (17,1);
\draw[black,fill=black] (13,1) circle (.5ex);
\draw[black,fill=black] (14,1) circle (.5ex);
\draw[black,fill=black] (15,1) circle (.5ex);
\draw[black,thick] (15,1) arc (60:120:1);
\draw[black,fill=black] (16,1) circle (1ex);
\draw[gray, thin,  dotted] (18,1) -- (23,1);
\draw[black,fill=black] (19,1) circle (.5ex);
\draw[black,fill=black] (20,1) circle (.5ex);
\draw[black,fill=black] (21,1) circle (.5ex);
\draw[black,thick] (21,1) arc (60:120:2);
\draw[black,fill=black] (22,1) circle (1ex);
\end{tikzpicture}
}
\end{center}

{\bf Case~3.} $|A_i\cap w(A_i)|=2$. Consider
the set of all involutions of $S_n$ which agree with $w$ outside 
$A_i\cup w(A_i)=A_i\cup\{r,s\}$ and for which $w(r),w(s)\in A_i$.
Note that, because of the definition of $Q_n$, we may assume $A_i<r<s$. 
This set of involution contains $24$ elements and exactly one of them does not
avoid $2143$ at the positions of $A_i$. Below  we show how
these elements look like at the $A_i$-positions. We list the elements
up to the information how the points of $A_i$ are connected
to $r$ and $s$ (hence each of our twelve diagrams correspond to
two elements). The unique element that does not
avoid $2143$  is
the last one in the last row for which the connection to $r$ and $s$
reverses the natural order. 
\begin{center}
\resizebox{\textwidth}{!}{
\begin{tikzpicture}
\draw[gray, thin,  dotted] (0,5) -- (5,5);
\draw[black,fill=black] (1,5) circle (1ex);
\draw[black,fill=black] (2,5) circle (1ex);
\draw[black,fill=black] (3,5) circle (.5ex);
\draw[black,fill=black] (4,5) circle (.5ex);
\draw[gray, thin,  dotted] (6,5) -- (11,5);
\draw[black,fill=black] (7,5) circle (1ex);
\draw[black,fill=black] (8,5) circle (1ex);
\draw[black,fill=black] (9,5) circle (.5ex);
\draw[black,fill=black] (10,5) circle (.5ex);
\draw[black,thick] (10,5) arc (60:120:1);
\draw[gray, thin,  dotted] (12,5) -- (17,5);
\draw[black,fill=black] (13,5) circle (1ex);
\draw[black,fill=black] (14,5) circle (.5ex);
\draw[black,fill=black] (15,5) circle (1ex);
\draw[black,fill=black] (16,5) circle (.5ex);
\draw[gray, thin,  dotted] (18,5) -- (23,5);
\draw[black,fill=black] (19,5) circle (1ex);
\draw[black,fill=black] (20,5) circle (.5ex);
\draw[black,fill=black] (21,5) circle (1ex);
\draw[black,fill=black] (22,5) circle (.5ex);
\draw[black,thick] (22,5) arc (60:120:2);
\draw[gray, thin,  dotted] (0,3) -- (5,3);
\draw[black,fill=black] (1,3) circle (1ex);
\draw[black,fill=black] (2,3) circle (.5ex);
\draw[black,fill=black] (3,3) circle (.5ex);
\draw[black,fill=black] (4,3) circle (1ex);
\draw[gray, thin,  dotted] (6,3) -- (11,3);
\draw[black,fill=black] (7,3) circle (1ex);
\draw[black,fill=black] (8,3) circle (.5ex);
\draw[black,fill=black] (9,3) circle (.5ex);
\draw[black,thick] (9,3) arc (60:120:1);
\draw[black,fill=black] (10,3) circle (1ex);
\draw[gray, thin,  dotted] (12,3) -- (17,3);
\draw[black,fill=black] (13,3) circle (.5ex);
\draw[black,fill=black] (14,3) circle (1ex);
\draw[black,fill=black] (15,3) circle (1ex);
\draw[black,fill=black] (16,3) circle (.5ex);
\draw[gray, thin,  dotted] (18,3) -- (23,3);
\draw[black,fill=black] (19,3) circle (.5ex);
\draw[black,fill=black] (20,3) circle (1ex);
\draw[black,fill=black] (21,3) circle (1ex);
\draw[black,fill=black] (22,3) circle (.5ex);
\draw[black,thick] (22,3) arc (60:120:3);
\draw[gray, thin,  dotted] (0,1) -- (5,1);
\draw[black,fill=black] (1,1) circle (.5ex);
\draw[black,fill=black] (2,1) circle (1ex);
\draw[black,fill=black] (3,1) circle (.5ex);
\draw[black,fill=black] (4,1) circle (1ex);
\draw[gray, thin,  dotted] (6,1) -- (11,1);
\draw[black,fill=black] (7,1) circle (.5ex);
\draw[black,fill=black] (8,1) circle (1ex);
\draw[black,fill=black] (9,1) circle (.5ex);
\draw[black,thick] (9,1) arc (60:120:2);
\draw[black,fill=black] (10,1) circle (1ex);
\draw[gray, thin,  dotted] (12,1) -- (17,1);
\draw[black,fill=black] (13,1) circle (.5ex);
\draw[black,fill=black] (14,1) circle (.5ex);
\draw[black,fill=black] (15,1) circle (1ex);
\draw[black,fill=black] (16,1) circle (1ex);
\draw[gray, thin,  dotted] (18,1) -- (23,1);
\draw[black,fill=black] (19,1) circle (.5ex);
\draw[black,fill=black] (20,1) circle (.5ex);
\draw[black,thick] (20,1) arc (60:120:1);
\draw[black,fill=black] (21,1) circle (1ex);
\draw[black,fill=black] (22,1) circle (1ex);
\end{tikzpicture}
}
\end{center}

{\bf Case~4.} $|A_i\cap w(A_i)|=1$. Consider
the set of all involutions of $S_n$ which agree with $w$ outside 
$A_i\cup w(A_i)=A_i\cup\{r,s,t\}$ and for which 
$w(r),w(s),w(t)\in A_i$. 
Note that, because of the definition of $Q_n$, we may assume $A_i<r<s<t$. 
This set of involutions contains $24$ elements and
exactly one of them does not avoid $2143$ at the positions of $A_i$.
Below  we show how these elements look like at the $A_i$-positions. 
We list the elements
up to the information on how the points of $A_i$ are connected
to $r$, $s$ and $t$. The unique element that does not
avoid $2143$ corresponds to the third diagram for which $4i-3$ maps to $r$,
then $4i$ maps to $s$ and, finally, $4i-1$ maps to $t$.
\begin{center}
\resizebox{\textwidth}{!}{
\begin{tikzpicture}
\draw[gray, thin,  dotted] (0,1) -- (5,1);
\draw[black,fill=black] (1,1) circle (1ex);
\draw[black,fill=black] (2,1) circle (1ex);
\draw[black,fill=black] (3,1) circle (1ex);
\draw[black,fill=black] (4,1) circle (.5ex);
\draw[gray, thin,  dotted] (6,1) -- (11,1);
\draw[black,fill=black] (7,1) circle (1ex);
\draw[black,fill=black] (8,1) circle (1ex);
\draw[black,fill=black] (9,1) circle (.5ex);
\draw[black,fill=black] (10,1) circle (1ex);
\draw[gray, thin,  dotted] (12,1) -- (17,1);
\draw[black,fill=black] (13,1) circle (1ex);
\draw[black,fill=black] (14,1) circle (.5ex);
\draw[black,fill=black] (15,1) circle (1ex);
\draw[black,fill=black] (16,1) circle (1ex);
\draw[gray, thin,  dotted] (18,1) -- (23,1);
\draw[black,fill=black] (19,1) circle (.5ex);
\draw[black,fill=black] (20,1) circle (1ex);
\draw[black,fill=black] (21,1) circle (1ex);
\draw[black,fill=black] (22,1) circle (1ex);
\end{tikzpicture}
}
\end{center}

{\bf Case~5.} $|A_i\cap w(A_i)|=0$. Consider
the set of all involutions of $S_n$ which agree with $w$ outside 
$A_i\cup w(A_i)=A_i\cup\{r,s,t,v\}$ and for which 
$w(r),w(s),w(t),w(v)\in A_i$. This set contains $24$ elements 
which correspond to all possible bijections between 
$A_i$ and $\{r,s,t,v\}$ (that is, our diagram
is 
\resizebox{3cm}{!}{
\begin{tikzpicture}
\draw[gray, thin,  dotted] (0,1) -- (5,1);
\draw[black,fill=black] (1,1) circle (1ex);
\draw[black,fill=black] (2,1) circle (1ex);
\draw[black,fill=black] (3,1) circle (1ex);
\draw[black,fill=black] (4,1) circle (1ex);
\end{tikzpicture}
})
and
exactly one of them does not avoid $2143$ at the positions of $A_i$.

As $\frac{9}{10}<\frac{15}{16}<\frac{23}{24}$
and the probability of $X_i$ is a convex linear 
combination of these numbers, the claim follows.
\end{proof}

\begin{lemma}\label{lem8}
The random events $X_1,X_2,\dots,X_k$ are independent.
\end{lemma}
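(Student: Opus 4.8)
The plan is to deduce Lemma~\ref{lem8} from the stronger assertion that, for a uniformly random $w\in Q_n$, the window patterns
$P_i:=\mathrm{pat}\bigl(w(4i-3),w(4i-2),w(4i-1),w(4i)\bigr)\in S_4$, $i=1,\dots,k$, form a mutually independent family of random variables. Since $X_i$ is exactly the event $P_i\neq 2143$, each $X_i$ is a function of $P_i$ alone, so independence of the $P_i$ gives independence of the $X_i$. Throughout write $B:=\{4k+1,\dots,n\}$ and $N:=|B|=n-4k$.

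First I would record the relevant structure of $Q_n$. For $w\in Q_n$ the defining condition forces $w(A_i)\subseteq A_i\cup B$ for every $i$; hence $w$ restricts on $A_i$ to a partial involution $\rho_i$ of $A_i$ whose set $E_i$ of unmatched points (the ``loose ends'' of block $i$) is exactly $\{x\in A_i:w(x)\in B\}$. Conversely, $w$ is reconstructed from the tuple $(\rho_1,\dots,\rho_k)$, an injection $\phi$ of $E_1\sqcup\dots\sqcup E_k$ into $B$, and an arbitrary involution on $B\setminus\mathrm{im}\,\phi$. The key local fact --- which is precisely the bookkeeping underlying the five cases in the proof of Lemma~\ref{lem7} --- is that $P_i$ is determined by $\rho_i$ together with the relative order in $B$ of $\phi|_{E_i}$: each value of $w$ at an $A_i$-position lies in $A_i$ or in $B$, all $B$-values exceed all $A_i$-values, the $A_i$-values are ordered among themselves by $\rho_i$, and the $B$-values by $\mathrm{pat}(\phi|_{E_i})$.

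I would then argue in two stages. For the first, condition on $(\rho_1,\dots,\rho_k)$, hence on the sets $E_i$ and on $p:=\sum_i|E_i|$. Conditionally, $\phi$ is a uniformly random injection of $E_1\sqcup\dots\sqcup E_k$ into $B$ (the tail involution being independent and pattern-irrelevant), so the restrictions $\mathrm{pat}(\phi|_{E_i})$ are mutually independent, each uniform over the $|E_i|!$ linear orders of $E_i$ --- this is the standard fact that the restrictions of a uniformly random linear order to disjoint blocks are independent and uniform. Combined with the local fact above, this shows that, \emph{conditionally} on $(\rho_j)_j$, the patterns $P_i$ are independent, and that $\Pr(P_i=\tau\mid(\rho_j)_j)$ depends on $(\rho_j)_j$ only through $\rho_i$, in fact only through $|E_i|$: for each $e\in\{0,1,2,3,4\}$ there is at most one partial involution of a $4$-set with $e$ loose ends compatible with a prescribed pattern $\tau$.

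The second stage --- removing the conditioning --- is where I expect the real obstacle to lie. The only surviving coupling between blocks is through the joint law of $(\rho_1,\dots,\rho_k)$: the number of tail completions of a given $(\rho_j)_j$ is $\tfrac{N!}{(N-p)!}\mathbf{i}_{N-p}$, so $\Pr\bigl((\rho_j)_j\bigr)$ is proportional to this quantity and hence depends on $(\rho_j)_j$ only through the loose-end profile $(|E_1|,\dots,|E_k|)\in\{0,1,2,3,4\}^k$ via its sum $p$. Since the conditional probabilities of the first stage also depend only on the $|E_i|$, everything reduces to showing that, under the weighted measure $\propto\bigl(\prod_i\binom{4}{|E_i|}\mathbf{i}_{4-|E_i|}\bigr)\tfrac{N!}{(N-p)!}\mathbf{i}_{N-p}$ on profiles, the relevant per-block quantities are product-independent. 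Here the hypothesis $4k^3\le n$ is essential: $N=n-4k$ is enormous next to $p\le 4k$, so by \eqref{eq1}, applied to $\mathbf{i}_{N-p}/\mathbf{i}_N$, the weight $\tfrac{N!}{(N-p)!}\mathbf{i}_{N-p}$ factorizes through $p$ (it is, to the precision needed, of the form $\mathbf{i}_N\cdot\lambda^{\,p}$ uniformly over $p\le 4k$). Pushing this factorization back through the first stage gives $\Pr\bigl(\bigcap_{i\in T}\{P_i=\tau_i\}\bigr)=\prod_{i\in T}\Pr(P_i=\tau_i)$ for all $T\subseteq\{1,\dots,k\}$ and all $\tau_i\in S_4$, and then inclusion--exclusion applied to the events $\{P_i=2143\}$ yields the independence of $X_1,\dots,X_k$. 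The delicate point, and the one that must be handled with care, is exactly this passage from the factorization of the tail-completion weight to the exact product form for the patterns.
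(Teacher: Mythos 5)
Your analysis is considerably more penetrating than the paper's own one\nobreakdash-line argument, which simply asserts that, since $w(A_i)\cap A_j=\varnothing$, information at $A_i$ ``does not affect'' information at $A_j$. You have correctly isolated the one genuine source of coupling that this ignores: the blocks compete for images in the common tail $B$, and the number of completions of a given loose-end profile, $W(p)=\tfrac{N!}{(N-p)!}\,\mathbf{i}_{N-p}$, depends on the \emph{total} number $p$ of loose ends. Your first stage (conditional independence and uniformity of the window patterns given the partial involutions $\rho_i$) is correct. The gap is exactly where you flag it: an asymptotic factorization $W(p)=\mathbf{i}_N\lambda^p(1+o(1))$ cannot yield the exact identity $\Pr\bigl(\bigcap_{i\in T}\{P_i=\tau_i\}\bigr)=\prod_{i\in T}\Pr(P_i=\tau_i)$, only that identity up to a multiplicative $1+o(1)$. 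Moreover the exact identity should not be expected: $W(p)$ is not exactly log-linear in $p$, so the loose-end profile is not a product measure. A minimal instance of the same phenomenon: among involutions of $S_5$ conditioned on $w(1)\neq 2$, the events $\{w(1)=1\}$ and $\{w(2)=2\}$ each have probability $\tfrac{10}{22}$, but their intersection has probability $\tfrac{4}{22}\neq\bigl(\tfrac{10}{22}\bigr)^2$. So exact independence, as claimed by Lemma~\ref{lem8} and by the paper's proof of it, is not actually available by your route (nor, in all likelihood, at all).

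What your argument does prove, once the second stage is made quantitative, is the inequality $\Pr\bigl(\bigcap_i X_i\bigr)\le(1+o(1))\prod_i\Pr(X_i)\le(1+o(1))\bigl(\tfrac{23}{24}\bigr)^k$, and that is all Theorem~\ref{thm4} uses Lemma~\ref{lem8} for. Concretely, writing $W(p)=\mathbf{i}_N\prod_{j=0}^{p-1}\tfrac{(N-j)\,\mathbf{i}_{N-j-1}}{\mathbf{i}_{N-j}}$ and using $\mathbf{i}_m/\mathbf{i}_{m-1}=\sqrt{m}\,(1+O(1/\sqrt{m}))$ (from a quantitative form of \eqref{eq1} or from the recursion $\mathbf{i}_m=\mathbf{i}_{m-1}+(m-1)\mathbf{i}_{m-2}$), one gets $W(p)=\mathbf{i}_N N^{p/2}\bigl(1+O(k/\sqrt{N})\bigr)$ uniformly over $p\le 4k$; since $k\sim(n/4)^{1/3}$ while $N\sim n$, the error $O(k/\sqrt{N})=O(n^{-1/6})$ is a single uniform $1+o(1)$ factor that passes through all the profile sums. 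You should therefore restate the conclusion as asymptotic (one-sided) independence and carry this factor explicitly, rather than assert the exact product form from a factorization that holds only ``to the precision needed''. With that correction your write-up is sound, and it repairs a step that the paper's own proof does not address at all.
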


\begin{proof}
Let $1\leq i<j\leq k$. Due to the definition of $Q_n$, 
for $w\in Q_n$, we have $w(A_i)\cap A_j=\varnothing$. 
Therefore the information on possible
$2143$-avoidance at the positions of $A_i$  does not affect
the information on possible $2143$-avoidance at the positions of $A_j$.
The claim of the lemma follows.
\end{proof}

From Lemmata~\ref{lem7} and \ref{lem8} it follows that the
probability of the intersection of the $X_i$'s
is bounded by $\left(\frac{23}{24}\right)^{k}$
and hence approaches $0$ when $k\to\infty$.
Taking Lemma~\ref{lem6} into account,
this implies Theorem~\ref{thm4}.

\vspace{2mm}

\noindent
Department of Mathematics, Uppsala University, Box. 480,
SE-75106, Uppsala, \\ SWEDEN, 
emails:
{\tt samuel.creedon\symbol{64}math.uu.se}\hspace{5mm}
{\tt mazor\symbol{64}math.uu.se}

\end{document}